\documentclass[a4, 12pt, 
]{amsart}
\usepackage[utf8]{inputenc}
\usepackage[T1]{fontenc}





\usepackage[colorlinks=true, linkcolor=blue, urlcolor=magenda, citecolor=red]{hyperref}

\usepackage{color}
\usepackage{xcolor} 
\usepackage{ulem}
\usepackage{calligra}
\usepackage{mathrsfs}

\usepackage[all]{xy}


\newtheorem{theo}{Theorem}[section]

\newtheorem{lemm}[theo]{Lemma}
\newtheorem{cor}[theo]{Corollary}

\numberwithin{equation}{section}

\theoremstyle{definition}

\theoremstyle{remark}
\newtheorem{rem}[theo]{Remark}





\newcommand{\Alb}[0]{\operatorname{Alb}}

\newcommand{\Image}[0]{\operatorname{Im}\hspace{-0.03cm}}

\newcommand{\codim}[0]{\operatorname{codim}}
\newcommand{\Sym}[0]{\operatorname{Sym}}
\newcommand{\GL}[0]{\operatorname{GL}}

\newcommand{\sat}{{\rm{sat}}}

\newcommand{\cal}[1]{\mathcal{#1}}

\makeatletter
\newcommand*{\da@rightarrow}{\mathchar"0\hexnumber@\symAMSa 4B }
\newcommand*{\da@leftarrow}{\mathchar"0\hexnumber@\symAMSa 4C }
\newcommand*{\xdashrightarrow}[2][]{%
  \mathrel{%
    \mathpalette{\da@xarrow{#1}{#2}{}\da@rightarrow{\,}{}}{}%
  }%
}
\newcommand{\xdashleftarrow}[2][]{%
  \mathrel{%
    \mathpalette{\da@xarrow{#1}{#2}\da@leftarrow{}{}{\,}}{}%
  }%
}
\newcommand*{\da@xarrow}[7]{%
  \sbox0{$\ifx#7\scriptstyle\scriptscriptstyle\else\scriptstyle\fi#5#1#6\m@th$}%
  \sbox2{$\ifx#7\scriptstyle\scriptscriptstyle\else\scriptstyle\fi#5#2#6\m@th$}%
  \sbox4{$#7\dabar@\m@th$}%
  \dimen@=\wd0 %
  \ifdim\wd2 >\dimen@
    \dimen@=\wd2 %
  \fi
  \count@=2 %
  \def\da@bars{\dabar@\dabar@}%
  \@whiledim\count@\wd4<\dimen@\do{%
    \advance\count@\@ne
    \expandafter\def\expandafter\da@bars\expandafter{%
      \da@bars
      \dabar@ 
    }%
  }%
  \mathrel{#3}%
  \mathrel{%
    \mathop{\da@bars}\limits
    \ifx\\#1\\%
    \else
      _{\copy0}%
    \fi
    \ifx\\#2\\%
    \else
      ^{\copy2}%
    \fi
  }%
  \mathrel{#4}%
}
\makeatother


\usepackage{geometry}
\geometry{left=35mm,right=35mm,top=43mm,bottom=43mm}




\begin{document}

\title[Compact K\"ahler manifolds with psef tangent bundle]
{On compact K\"ahler manifolds with \\ pseudo-effective tangent bundle}

\author{Shin-ichi MATSUMURA}

\address{Mathematical Institute, Tohoku University, 
$\&$ Division for the Establishment of Frontier Science of Organization for Advanced Studies, 
6-3, Aramaki Aza-Aoba, Aoba-ku, Sendai 980-8578, Japan.}

\email{{\tt mshinichi-math@tohoku.ac.jp}}
\email{{\tt mshinichi0@gmail.com}}

\author{Chenghao Qing}
\address{Yau Mathematical Sciences Center, Tsinghua University, 30 Shuangqing Road, Haidian District, Beijing 100084, China}
\email{\tt qingchenghao@amss.ac.cn}
\date{\today, version 0.01}

\renewcommand{\subjclassname}{%
\textup{2020} Mathematics Subject Classification}
\subjclass[2020]{Primary 32J25, Secondary 14F35, 58A30}

\keywords
{Singular Hermitian metrics, 
Pseudo-effective vector bundles, 
Tangent bundles,
Rationally connected varieties, 
Abelian varieties,  
Fundamental groups, 
Varieties of special type, 
Uniformization theorems.}

\maketitle

\begin{abstract}
In this paper, we prove that a compact K\"ahler manifold $X$ with pseudo-effective (resp.\,singular positively curved) tangent bundle admits a smooth (resp.\,locally constant) rationally connected fibration $\phi \colon X \to Y$ onto a finite \'etale quotient $Y$ of a compact complex torus. This result extends the structure theorem previously established for smooth projective varieties to compact K\"ahler manifolds.
\end{abstract}

\section{Introduction}\label{Sec1}

The pioneering works of \cite{HSW81, MZ86, Mok88} and \cite{CP91, DPS94} 
revealed the structure of a compact K\"ahler manifold $X$ whose tangent bundle $T_{X}$ 
either has semi-positive holomorphic bisectional curvature or is nef, 
leading to the decomposition of $X$ into a Fano manifold and a (compact complex) torus 
via the naturally associated fibration. 
In \cite{HIM22}, we attempted to generalize these results to the setting of ``singular Hermitian metrics'', 
using the notions of singular positively curved and pseudo-effective tangent bundles. 
Consequently, the following structure theorem can be proved when $X$ is a smooth \textit{projective} variety 
(see \cite{IMZ} for a generalization to klt varieties). 
Although the extension of this result to compact K\"ahler manifolds was naturally expected, 
it remained unsolved, as posed in \cite[Problem 4.5]{Mat22b}. 
In this paper, we establish the structure theorem for compact K\"ahler manifolds, thus completing this extension.

\begin{theo}\label{thm-main}
Let $X$ be a compact K\"ahler manifold with pseudo-effective tangent bundle 
$($see Subsection \ref{subsec-notation} for our definition of pseudo-effective vector bundles$)$. 
Then $X$ admits a fibration $\phi: X \to Y$ $($i.e.,\,a surjective holomorphic map with connected fibers$)$
with the following properties\,$:$
\begin{itemize}
\item[(1)] The fibration $\phi: X \to Y$ is smooth $($i.e.,\,all fibers are smooth$)$. 
\item[(2)] The base $Y$ is a finite \'etale quotient of a $($compact complex$)$ torus 
$($i.e.,\,there exists a finite \'etale cover $T\to Y$ by a torus $T$$)$. 
\item[(3)] A very general fiber $F$ of $\phi$ is rationally connected. 
\item[(4)] A very general fiber $F$ of $\phi$  has the pseudo-effective tangent bundle. 
\end{itemize}
Moreover, if we further assume that 
$T_X$ admits a $($possibly singular$)$ positively curved singular Hermitian metric, 
then we have$:$
\begin{itemize}
\item[(5)] The morphism $\phi: X \to Y$ is a locally constant fibration 
$($see \cite[Definition 2.3]{MW} for the definition$).$ In particular, 
it  is a locally trivial fibration.
\end{itemize} 
\end{theo}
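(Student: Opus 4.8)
The plan is to take $\phi$ to be the maximal rationally connected (MRC) fibration of $X$. For a compact K\"ahler manifold this exists, by Campana's theory of the rational quotient, as an almost holomorphic dominant map whose general fiber is rationally connected, which already yields the rational connectedness in (3) for a very general fiber $F$. The first task is to upgrade this a priori meromorphic map to an honest smooth morphism. The guiding principle is that the pseudo-effectivity of $T_X$ propagates through the relative tangent sequence
\[
0 \to T_{X/Y} \to T_X \to \phi^{*}T_Y \to 0,
\]
valid over the locus where $\phi$ is a submersion. Since a torsion-free quotient of a pseudo-effective bundle (in the sense of singular Hermitian metrics of \cite{HIM22}) remains pseudo-effective, the quotient $\phi^{*}T_Y$, and hence $T_Y$, is pseudo-effective on a Zariski-open subset, and then everywhere after extension across the discriminant.

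By the K\"ahler analogue of the Graber--Harris--Starr theorem, the base $Y$ of the MRC fibration is not uniruled, so by the K\"ahler characterization of non-uniruledness its canonical bundle $K_Y$ is pseudo-effective. On the other hand $-K_Y = \det T_Y$ is pseudo-effective, because the determinant of a pseudo-effective bundle is a pseudo-effective line bundle. Hence $K_Y \equiv 0$, and the K\"ahler Beauville--Bogomolov decomposition provides a finite \'etale cover $\widetilde{Y} \cong T \times \prod_i Z_i$ of $Y$, where $T$ is a torus and each $Z_i$ is a strict Calabi--Yau or irreducible holomorphic symplectic manifold. The pseudo-effectivity of $T_{\widetilde Y}$ descends to each $T_{Z_i}$; but the tangent bundle of a strict Calabi--Yau or hyperk\"ahler manifold admits no such positivity (it would force $T_{Z_i}$ to be numerically flat and thus $Z_i$ a torus, contradicting the Ricci-flat holonomy), so these factors are absent and $\widetilde{Y}$ is a torus, giving (2). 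The same positivity obstructs fibers of excess dimension, so $\phi$ is a smooth morphism, establishing (1); and (4) follows since $T_F$ sits inside $T_X|_F$ through the tangent sequence and thus inherits pseudo-effectivity.

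For the final statement (5), assume in addition that $T_X$ carries a positively curved singular Hermitian metric $h$. The plan is to show that the tangent sequence splits in a parallel fashion, so that $X$ is reconstructed from the torus cover $\widetilde Y$ through a representation $\rho \colon \pi_1(Y) \to \Aut(F)$, i.e.\ that $\phi$ is locally constant in the sense of \cite[Definition 2.3]{MW}. Concretely, the positive curvature of $h$ forces the quotient $\phi^{*}T_Y$ to be numerically flat --- both it and its dual being nef --- and a numerically flat bundle on a finite \'etale quotient of a torus comes from a unitary representation of $\pi_1(Y)$. This flat structure should produce an integrable horizontal distribution transverse to the fibers whose monodromy is exactly $\rho$, yielding the locally constant, and in particular locally trivial, structure.

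I expect the principal obstacle throughout to be the interface between the analytic positivity --- whose curvature is only a closed positive current --- and the holomorphic and flat conclusions required for smoothness and for (5). In the projective case of \cite{HIM22} one can cut down by hyperplane sections and invoke the minimal model program together with the algebraic theory of numerically flat bundles; in the K\"ahler setting these tools are unavailable, so the descent of pseudo-effectivity to $Y$, the vanishing of the second fundamental form of the tangent sequence, and the promotion of ``numerically flat'' to ``locally constant'' must all be carried out purely metrically, while controlling the singularities of $h$ along the fibers. This metric descent of positivity is, in my view, the crux of the K\"ahler extension.
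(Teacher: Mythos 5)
Your route --- the MRC fibration combined with the K\"ahler version of \cite{BDPP13} due to \cite{Ou}, then Beauville--Bogomolov for the base --- is essentially the projective-case strategy of \cite{HIM22} transplanted to the K\"ahler setting. The paper explicitly acknowledges that this has become feasible thanks to \cite{Ou}, but deliberately takes a different path: it starts from the Albanese map $\phi \colon X \to \Alb(X)$, which is holomorphic by construction and smooth with pseudo-effective very general fibers by \cite[Theorem 3.12]{HIM22}, and the new work goes into proving that those fibers are \emph{projective} (hence rationally connected) via specialness and flat subsheaves of $\Omega_X$ (Theorems \ref{thm-sp} and \ref{thm-irr}). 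This difference matters, because the two genuinely hard points of your route are exactly the ones you pass over. First, you never prove (1): the rational quotient is a priori only an almost holomorphic meromorphic map, and ``the same positivity obstructs fibers of excess dimension, so $\phi$ is a smooth morphism'' is an assertion, not an argument; upgrading the MRC map to a holomorphic, everywhere-submersive fibration is the heart of the matter (in \cite{HIM22} this is achieved by ultimately identifying the fibration with an Albanese-type map, not by a pointwise positivity obstruction). Second, your descent of pseudo-effectivity to $T_Y$ does not work as stated: the quotient argument only produces metrics over the locus where $\phi$ is a submersion, and pseudo-effectivity is a global condition on singular metrics, not a generic one --- plurisubharmonic weights do not automatically extend across the discriminant, which can be a divisor in $Y$. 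If instead you resolve the indeterminacies by $\pi \colon \bar X \to X$, then $T_{\bar X}$ is no longer known to be pseudo-effective, so the quotient argument cannot even start there. Compare the proof of Theorem \ref{thm-sp}, which circumvents precisely this by pushing forward exterior powers and using the reflexivity isomorphism $\Lambda^m T_X \cong (\pi_*\Omega^m_{\bar X})^*$.

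Two further steps are incorrect or incomplete. Your justification of (4) is wrong: $T_F$ is a \emph{subbundle} of $T_X|_F$ (the kernel of $T_X|_F \to \phi^*T_Y|_F$), and subbundles of pseudo-effective bundles need not be pseudo-effective --- only quotients are; $\mathcal{O}_{\mathbb{P}^1}(-1) \subset \mathcal{O}_{\mathbb{P}^1}^{\oplus 2}$ is the standard counterexample. The true statement for very general fibers is part of \cite[Theorem 3.12]{HIM22} and requires a restriction argument for the singular metrics, not the tangent sequence. Finally, for (5) the phrase ``should produce an integrable horizontal distribution'' is where all the content lies: numerical flatness of $\phi^* T_Y$ is essentially vacuous here (after an \'etale cover, $T_Y$ is already trivial since $Y$ is a torus quotient) and by itself says nothing about the fibration being locally constant. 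What is actually needed is the splitting of the exact sequence $0 \to T_{X/Y} \to T_X \to \phi^*T_Y \to 0$ supplied by \cite[Theorem 3.12]{HIM22} under the positively-curved hypothesis, Ehresmann's theorem to get local triviality, the projectivity of the morphism $\phi$ via \cite[1.1 Theorem]{CH} (using that $\phi$ is an MRC fibration), and then \cite[Lemma 3.1]{Mul} to pass from locally trivial to locally constant --- including the verification, carried out in the paper, that M\"uller's proof survives the non-projectivity of the base.
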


\begin{rem}\label{rem-main}
In \cite{HIM22}, for a smooth projective variety $X$, 
we only proved that the fibration $\phi: X \to Y$ is locally trivial, which is weaker than the local constancy in (5). 
The local constancy in (5)  is proved by \cite[Lemma 3.1]{Mul} and \cite[Theorem 1.1 (5)]{HIM22}.
\end{rem}

\begin{cor}\label{cor-pi}
Let $X$ be a compact K\"ahler manifold with pseudo-effective tangent bundle. 
Then, the $($topological$)$ fundamental group $\pi_{1}(X)$ is virtually abelian 
$($i.e.,\,there exists an abelian subgroup of  $\pi_{1}(X)$ of finite index$).$
\end{cor}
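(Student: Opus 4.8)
The plan is to reduce the computation of $\pi_{1}(X)$ to that of $\pi_{1}(Y)$ by means of the fibration $\phi \colon X \to Y$ produced by Theorem \ref{thm-main}, and then to read off virtual abelianness directly from the torus structure of $Y$. First I would invoke Theorem \ref{thm-main} to obtain a smooth fibration $\phi \colon X \to Y$ whose base $Y$ is a finite \'etale quotient of a compact complex torus $T$ (so that there is a finite \'etale cover $T \to Y$), and whose very general fiber $F$ is rationally connected.

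Next I would promote $\phi$ to a topological fiber bundle. Since property (1) makes $\phi$ a proper holomorphic submersion between compact manifolds, Ehresmann's fibration theorem shows that $\phi$ is a locally trivial $C^{\infty}$ fiber bundle; in particular all of its fibers are mutually diffeomorphic. As a rationally connected compact K\"ahler manifold is projective and simply connected, the very general fiber $F$ satisfies $\pi_{1}(F)=1$, and because the diffeomorphism type of the fibers is constant, \emph{every} fiber is then simply connected. I can therefore apply the long exact homotopy sequence of the fiber bundle $F \hookrightarrow X \xrightarrow{\phi} Y$, whose relevant segment reads $\pi_{1}(F) \to \pi_{1}(X) \to \pi_{1}(Y) \to \pi_{0}(F)$; since $F$ is connected and $\pi_{1}(F)=1$, this yields an isomorphism $\pi_{1}(X) \cong \pi_{1}(Y)$.

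Finally, the finite \'etale cover $T \to Y$ exhibits $\pi_{1}(T) \cong \mathbb{Z}^{2\dim_{\mathbb{C}} T}$ as a subgroup of finite index in $\pi_{1}(Y)$, so that $\pi_{1}(Y)$, and hence $\pi_{1}(X)$, contains an abelian subgroup of finite index; this is exactly virtual abelianness. The main obstacle I anticipate is the gap between the rational connectedness of only the \emph{very general} fiber, as recorded in property (3), and the simple connectedness of \emph{all} fibers required to run the homotopy exact sequence. I expect to bridge this gap precisely through Ehresmann's theorem, so that it is the smoothness in property (1), rather than the rational connectedness of a single fiber, that does the essential work.
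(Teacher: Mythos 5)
Your proof is correct and coincides with the paper's (implicit) argument: the paper states Corollary \ref{cor-pi} without proof as an immediate consequence of Theorem \ref{thm-main}, and your chain of steps --- Ehresmann's theorem to make $\phi$ a topological fiber bundle, projectivity and simple connectedness of the rationally connected fiber, the homotopy exact sequence giving $\pi_{1}(X)\cong\pi_{1}(Y)$, and $\pi_{1}(T)\cong\mathbb{Z}^{2\dim_{\mathbb{C}}T}$ sitting with finite index in $\pi_{1}(Y)$ --- is exactly the standard way to fill this in. No gaps.
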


At the end of this section, we outline the proof of the first conclusion of Theorem \ref{thm-main}. 
In \cite{HIM22}, for a smooth projective variety $X$, we consider an MRC fibration $X \dashrightarrow Y$ to obtain the desired fibration. 
A key point here is that the canonical bundle $K_{Y}$ of a smooth base $Y$ of an MRC fibration is pseudo-effective, 
which relies on the projectivity of $X$ (and also $Y$) when applying the result of \cite{BDPP13} together with \cite{GHS03}. 
(We note that, during the preparation of this paper, the result of \cite{BDPP13} was generalized to compact K\"ahler manifolds by \cite{Ou}, 
but we believe our methods offer a different perspective and thus deserve to be presented.)
In this paper, we consider the Albanese map $\alpha \colon X \to Y := \Alb(X)$ instead of MRC fibrations. 
The main difficulty in this setting is to prove that a very general fiber $F$ is rationally connected. 

The argument in \cite[Proposition 3.12]{DPS94} shows that we may assume the augmented irregularity $\hat{q}(F)$ vanishes 
after passing to a finite \'etale cover of $X$. 
Furthermore, \cite[Theorem 3.12]{HIM22} shows that the tangent bundle $T_{F}$ is also pseudo-effective. 
Once we know $F$ is projective, we can deduce that $F$ is rationally connected 
by applying the structure theorem for smooth projective varieties proved in \cite{HIM22}. 
Hence, the main difficulty reduces to proving that a compact K\"ahler manifold $X$ is projective 
provided that the tangent bundle $T_{X}$ is pseudo-effective and its augmented irregularity $\hat{q}(X)$ vanishes. 
To prove this, focusing on the notion of special varieties introduced by \cite{Cam04}, we adopt an idea from \cite{Mat}, 
which extends the result of \cite{Mat20, Mat22a} to compact K\"ahler manifolds. 
We first show that any representation $\rho \colon \pi_{1}(X) \to \GL(r\mathord{:}\,\mathbb{C})$ has a virtually abelian image 
(see Theorem \ref{thm-sp}). 
If $X$ were not projective, there would exist a non-trivial holomorphic $2$-form on $X$, 
implying that the cotangent bundle $\Omega_{X}$ admits a flat subvector bundle $\mathcal{F}$. 
Consider the representation $\rho \colon \pi_{1}(X) \to \GL(r\mathord{:}\,\mathbb{C})$ associated with $\mathcal{F}$. 
Then, by the assumption $\hat{q}(F)=0$ and the virtual abelianity, 
we conclude that $\mathcal{F}$ is \'etale trivializable (see Theorem \ref{thm-irr}), 
which contradicts the assumption $\hat{q}(F)=0$.
Hence, the fiber $F$ should be projective, and thus rationally connected.

\subsection*{Acknowledgments}\label{subsec-ack}
The first author would like to thank Dr.\,Niklas M\"uller 
for discussions related to \cite[Lemma 3.1]{Mul}. 
The first author was partially supported by Grant-in-Aid for Scientific Research (B) $\sharp$21H00976 from JSPS, 
Fostering Joint International Research (A) $\sharp$19KK0342 from JSPS, 
and the JST FOREST Program, $\sharp$PMJFR2368 from JST. 
The second author wishes to express his gratitude to Professor Juanyong Wang 
for suggesting that he consider the relevant issues and for kindly answering his questions in algebraic geometry. 
He would also like to thank Professor Xiangyu Zhou for his guidance and encouragement, 
and Professor Xiaokui Yang for his support and helpful advice.

\subsection{Notation and Conventions}\label{subsec-notation}

We use the terms ``invertible sheaves'' and ``line bundles'' interchangeably, and adopt the additive notation for tensor products (e.g.,\,$L+M:=L\otimes M$ for line bundles $L$ and $M$). Additionally, we use the terms ``locally free sheaves'' and ``vector bundles'' interchangeably. The term ``fibrations'' refers to a proper surjective morphism with connected fibers, and the term ``finite \'etale covers'' refers to an unramified finite surjective morphism.

Let us clarify our definition of pseudo-effectivity for a vector bundle $E$ on a compact complex manifold $X$. 
Fix a Hermitian form $\omega$ on $X$. 
We say that $E$ is \textit{pseudo-effective} if, for every $m \in \mathbb{Z}_{+}$, there exists a singular Hermitian metric $h_{m}$ on the $m$-th symmetric power 
$\Sym^{m} {E}$ such that 
$\log |u|_{h_{m}^{*}}$ is $\omega$-psh for any local section $u$ of the dual vector bundle $\Sym^m{E}^{*}$.


When $X$ is a smooth projective variety, 
this definition is equivalent to requiring that the non-nef locus of $\mathcal{O}_{\mathbb{P}(E)}(1)$ be non-dominant over $X$ 
(see \cite[Proposition 2.4]{Mat23}), 
where $\mathcal{O}_{\mathbb{P}(E)}(1)$ is the hyperplane bundle on the projective space bundle $\mathbb{P}(E) \to X$. 
This definition is stronger than merely requiring that $\mathcal{O}_{\mathbb{P}(E)}(1)$ be pseudo-effective, 
and is sometimes called ``strongly pseudo-effective'', 
but in this paper we simply refer to it as ``pseudo-effective''. 
See \cite{BKK}, \cite[Subsections 2.1, 2.2]{HIM22}, and \cite[Subsections 2.1, 2.2]{Mat23}
for further details on pseudo-effective or singular positively curved vector bundles (and more generally, torsion-free sheaves).

\section{Proof of the main result}\label{Sec3}
This section is devoted to the proof of Theorem \ref{thm-main}. 
We begin with the following elementary lemma: 

\begin{lemm}\label{lemm-ext}
Let $\mathcal{E}$ be a torsion-free sheaf on a compact complex manifold $X$. 
If $\mathcal{E}$ is pseudo-effective, then the reflexive hull $\Lambda^{[p]}\mathcal{E}$ 
of the $p$-th exterior product is also pseudo-effective. 
\end{lemm}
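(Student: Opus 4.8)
The plan is to reduce to the locally free locus and then propagate pseudo-effectivity through the multilinear-algebra operations that build $\Lambda^{[p]}\mathcal{E}$ out of $\mathcal{E}$. Since $\mathcal{E}$ is torsion-free, it is locally free on an open set $X_0 = X \setminus Z$ whose complement $Z$ is analytic of $\codim Z \geq 2$, and the reflexive hull $\Lambda^{[p]}\mathcal{E} = (\Lambda^{p}\mathcal{E})^{**}$ agrees with the vector bundle $\Lambda^{p}E$, where $E := \mathcal{E}|_{X_0}$, on $X_0$. Because pseudo-effectivity of a torsion-free sheaf is tested on its locally free locus through the $\omega$-psh weights $\log|u|_{h_m^{*}}$, I would first prove the assertion for the bundle $E$ on $X_0$ and then extend the weights across $Z$.

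The engine is two stability properties of pseudo-effectivity, both established in the cited references: (Q) a quotient bundle (in particular a direct summand) of a pseudo-effective bundle is pseudo-effective; and (T) a tensor product of finitely many pseudo-effective bundles is pseudo-effective. Granting these, the proof for $E$ is immediate: in characteristic zero $\Lambda^{p}E$ is a direct summand of $E^{\otimes p}$ (via the antisymmetrization idempotent), so $E^{\otimes p}$ is pseudo-effective by (T) and $\Lambda^{p}E$ is pseudo-effective by (Q). Property (Q) is the easy one: for a quotient $V \twoheadrightarrow W$, the quotient metric on $\Sym^m W$ induced from $\Sym^m V$ dualizes to the restriction of the dual metric to the subbundle $\Sym^m W^{*} \subseteq \Sym^m V^{*}$, so the $\omega$-psh condition transfers verbatim, with the \emph{same} $\omega$.

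I expect the main obstacle to be the uniformity hidden in (T): the definition of pseudo-effectivity demands a single Hermitian form controlling $\log|u|_{h_m^{*}}$ for every symmetric power $m$ simultaneously, and a naive metric construction destroys this. For instance, realizing $\Sym^m(\Lambda^p E)$ as a quotient of $E^{\otimes mp}$ and using the tensor-power metric $h_1^{\otimes mp}$ produces a slack that degenerates like $-mp\,\omega$; the alternative quotient of $(\Sym^m E)^{\otimes p}$ fails outright for $\rank E \geq 4$, since it misses the $\det$-type constituents (e.g.\ the $\Lambda^{r}E$-summand of $\Sym^m\Lambda^p E$). The correct way to keep the slack bounded is to control the \emph{number of tensor factors} by the rank rather than by the degree: decomposing $\Sym^m(\Lambda^p E) = \bigoplus_{\lambda}(\mathbb{S}_{\lambda}E)^{\oplus a_\lambda}$ into Schur functors with $\ell(\lambda) \leq r := \rank E$, each $\mathbb{S}_{\lambda}E$ occurs (with multiplicity $K_{\lambda\lambda}=1$) as a direct summand of $\bigotimes_{i=1}^{\ell(\lambda)} \Sym^{\lambda_i}E$, a tensor product of at most $r$ symmetric powers. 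The pseudo-effective metrics $h_{k}$ on the $\Sym^{k}E$ then give, via the tensor-product property with additive weights, a metric whose slack is at most $r\,\omega$ — uniform in $m$ — and the orthogonal direct sum over $\lambda$ yields the required uniform metric on $\Sym^m(\Lambda^p E)$.

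Finally I would extend across $Z$: the weights $\log|u|$ constructed above are $\omega'$-psh and locally bounded above near the codimension-$\geq 2$ set $Z$, hence extend as $\omega'$-psh functions by the Hartogs-type removable-singularity theorem, and the reflexivity of $\Lambda^{[p]}\mathcal{E}$ guarantees that the metrics themselves extend; this completes the torsion-free statement.
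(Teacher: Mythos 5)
Your proof is correct, but it is genuinely different from the one in the paper, and the comparison is worth spelling out. The paper's proof is a one-step pullback of metrics: it asserts an injective sheaf morphism
\[
i \colon \Sym^{[m]}(\Lambda^{[p]}\mathcal{E}^{*}) \hookrightarrow \Sym^{[m]}(\Sym^{[p]}\mathcal{E}^{*}) = \Sym^{[pm]}\mathcal{E}^{*},
\]
defines the metric $g$ on $\Sym^{[m]}(\Lambda^{[p]}\mathcal{E})$ so that $g^{*}$ is the restriction of $h_{mp}^{*}$ along $i$, and observes that $\log|v|_{g^{*}}=\log|i(v)|_{h_{mp}^{*}}$ is $\omega$-psh; uniformity in $m$ is then automatic, and no tensor-product statement for singular metrics is ever invoked. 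You instead decompose $\Sym^{m}(\Lambda^{p}E)$ into Schur functors $\mathbb{S}_{\lambda}E$ with $\ell(\lambda)\leq r$, realize each as a direct summand of $\Sym^{\lambda_{1}}E\otimes\cdots\otimes\Sym^{\lambda_{\ell}}E$ (at most $r$ factors), and invoke the tensor-product property of Griffiths-seminegative singular metrics, obtaining a slack of at most $r\omega$ uniformly in $m$. The notable point is that the constituent-counting objection you raise against the ``symmetric powers only'' shortcuts applies to the paper's own map: the injection $i$ does not exist in general. Already for $m=1$ it would be a map $\Lambda^{[p]}\mathcal{E}^{*}\hookrightarrow\Sym^{[p]}\mathcal{E}^{*}$ between non-isomorphic irreducible Schur functors, which exists neither equivariantly nor, for general $\mathcal{E}$, as any injective sheaf morphism; for $m=p=2$ and $\rank\mathcal{E}\geq 4$, the source $\Sym^{2}(\Lambda^{2}\mathcal{E}^{*})=\mathbb{S}_{(2,2)}\mathcal{E}^{*}\oplus\mathbb{S}_{(1,1,1,1)}\mathcal{E}^{*}$ contains a four-row constituent, whereas every constituent of $\Sym^{2}(\Sym^{2}\mathcal{E}^{*})=\mathbb{S}_{(4)}\mathcal{E}^{*}\oplus\mathbb{S}_{(2,2)}\mathcal{E}^{*}$ has at most two rows; and the displayed equality also fails, since $\Sym^{[pm]}\mathcal{E}^{*}$ is only a proper direct summand of $\Sym^{[m]}(\Sym^{[p]}\mathcal{E}^{*})$ once $m,p\geq 2$ and $\rank\mathcal{E}\geq 2$. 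So the paper's argument, as written, has a gap in its multilinear algebra, and your longer route is not over-engineering but a correct repair: its only non-elementary ingredient is the singular tensor-product lemma, which is available in the references the paper already cites (see the preliminaries of \cite{HIM22} and \cite{Mat23}). What the paper's approach would buy, if the injection existed, is brevity and complete independence from tensor-product considerations; what yours buys is an argument that actually closes, with the psh-defect controlled by the rank rather than by the degree $m$ --- exactly the uniformity that the definition of pseudo-effectivity demands.
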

\begin{proof}
Fix a Hermitian form $\omega$ on $X$. 
Recall that the pseudo-effectivity of the torsion-free sheaf $\cal{E}$ is defined by the following condition:
For every $m \in \mathbb{Z}_{+}$, 
there exists a singular Hermitian metric $h_{m}$ on the $m$-th symmetric power 
$\Sym^{m} \cal{E}|_{X_{0}}$ such that $ \log |u|_{h_{m}^{*}}$ is $\omega$-psh 
 for any local section $u$ of the dual sheaf $(\Sym^{m} \cal{E})^{*}$. 

Consider an injective morphism of sheaves
$$
i \colon 
\Sym^{[m]}(\Lambda^{[p]}\mathcal{E}^{*}) \hookrightarrow \Sym^{[m]}(\Sym^{[p]}\mathcal{E}^{*}) =\Sym^{[pm]} \mathcal{E}^{*}
$$
for an integer $m \in \mathbb{Z}_{+}$. 
  Let $g$ be the singular Hermitian metric on $\Sym^{[m]}(\Lambda^{[p]}\mathcal{E})$ induced by $h:=h_{mp}$ and the above morphism. 
By construction, 
we see that $\log |v|_{g^{*}}=\log |i (v)|_{h^{*}}$ is $\omega$-psh 
for any local section $v$ of $\Sym^{[m]}(\Lambda^{[p]}\mathcal{E}^{*})$, 
finishing the proof. 
\end{proof}

We will prove Theorem \ref{thm-irr} after showing that a compact K\"ahler manifold with pseudo-effective tangent bundle 
is of special type in the sense of Campana.  
(see \cite[Definitions 1.19, 2.1, Proposition 1.25, Theorem 2.22]{Cam04} for the precise definition).

\begin{theo}\label{thm-sp}
Let $X$ be a compact K\"ahler manifold with pseudo-effective tangent bundle. 
Then $X$ is of special type in the sense of Campana, 
$($i.e.,\,there is no almost holomorphic dominant rational map $\phi \colon X \dashrightarrow Y$ of general type$).$
In particular, the image of any $\GL$-representation
$
\rho \colon \pi_{1}(X) \to \GL(r\mathord{:}\,\mathbb{C}) 
$ 
is virtually abelian. 
\end{theo}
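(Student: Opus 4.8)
The plan is to prove the special-type statement first, and then derive virtual abelianity of GL-representations as a consequence via Campana's theory.
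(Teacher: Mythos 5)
Your proposal is not yet a proof: it only restates the logical structure of the theorem, and the entire mathematical content is missing. The reduction of the second assertion to the first is indeed legitimate and is exactly what the paper does --- once $X$ is known to be special, the virtual abelianity of the image of any representation $\rho \colon \pi_{1}(X) \to \GL(r\mathord{:}\,\mathbb{C})$ follows directly from Campana's theorem \cite[Theorem 7.8]{Cam04}, so that step is a one-line citation. But the first assertion, that pseudo-effectivity of $T_{X}$ forces $X$ to be special, is where all the work lies, and your plan says nothing about how to attack it.

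For comparison, the paper's argument runs as follows. Assume $X$ is not special; by Campana's theory this produces an almost holomorphic dominant rational map $\phi \colon X \dashrightarrow Y$ of general type with $m := \dim Y > 0$, and, on a resolution $\pi \colon \bar X \to X$ of indeterminacies, a saturated invertible subsheaf (Bogomolov sheaf) $L := \mathcal{O}_{\bar X}(\bar\phi^{*}K_{Y})_{\sat} \subset \Omega^{m}_{\bar X}$ with $\kappa(L) \geq m$. One then needs: (i) the fact that pseudo-effectivity of $T_{X}$ propagates to the exterior power $\Lambda^{m}T_{X}$ (Lemma \ref{lemm-ext} of the paper); (ii) dualizing $L \subset \Omega^{m}_{\bar X}$ and pushing forward by $\pi$ to obtain a generically surjective morphism $\Lambda^{m}T_{X} \cong (\pi_{*}\Omega^{m}_{\bar X})^{*} \to (\pi_{*}L)^{*}$, whence $(\pi_{*}L)^{*}$ is a pseudo-effective line bundle, forcing any semi-positively curved singular metric on $(\pi_{*}L)^{**}$ to be Hermitian flat; and (iii) a support-theorem argument showing that the curvature current of a positively curved metric on $L$ is the integration current of an effective $\pi$-exceptional divisor $E$, so $c_{1}(L) = c_{1}(E)$, contradicting $\kappa(L) \geq m > 0$. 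Without some version of steps (i)--(iii) --- or a genuinely different mechanism relating pseudo-effectivity of the tangent bundle to the nonexistence of general-type fibrations --- the proposal has a gap exactly at the theorem's main difficulty.
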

\begin{proof}
Assume, for contradiction, that $X$ is not of special type in the sense of Campana. 
Then, by definition, there exists an almost holomorphic dominant rational map 
$\phi \colon X \dashrightarrow Y$ and a resolution $\bar \phi \colon \bar X \to Y$ of its indeterminacies, 
equipped with the following diagram:
\begin{equation*}
\xymatrix@C=40pt@R=30pt{
 & \bar X  \ar[d]_\pi \ar[rd]^{\bar{\phi}\ \ }  & \\ 
& X \ar@{-->}[r]^{\phi \ \ \ }  &  Y.\\   
}
\end{equation*}
such that
\[
\kappa(L) \,\geq\, m := \dim Y \;>\; 0,
\]
where 
\[
L \;:=\; \mathcal{O}_{\bar X}\bigl({\bar \phi}^{*} K_{Y}\bigr)_{\sat} \;\subset\; \Omega_{\bar X}^{m}
\]
is the saturation of the invertible subsheaf defined by pulling back the canonical bundle $K_{Y}$. 
Here $\kappa(\bullet)$ denotes the Kodaira dimension, and 
$\Omega_{\bar X}^{m}$ is the $m$-th exterior product of the cotangent bundle $\Omega_{\bar X}$
(see \cite[Definitions 1.19, 2.1, Proposition 1.25, Theorem 2.22]{Cam04}).

Taking the pushforward by $\pi$ and then the dual, 
we obtain a sheaf morphism
\[
\Lambda^{m} T_{X} \;\cong\; \bigl(\pi_{*}\,\Omega_{\bar X}^{m}\bigr)^{*} \;\longrightarrow\; \bigl(\pi_{*} L\bigr)^{*}.
\]
The isomorphism on the left-hand side follows from reflexivity. 
Lemma \ref{lemm-ext} shows that $\Lambda^{m} T_{X}$ is pseudo-effective. 
Since this morphism is generically surjective by construction, 
we deduce that $(\pi_{*} L)^{*}$ is a pseudo-effective line bundle, 
by noting that the quotient of pseudo-effective sheaves under generically surjective sheaf morphism is also 
pseudo-effective. 
Thus, any singular Hermitian metric $g$ on $(\pi_{*} L)^{**}$ with semi-positive curvature must be Hermitian flat. 

On the other hand, since $L$ is pseudo-effective, 
there exists a singular Hermitian metric $h$ on $L$ with semi-positive curvature. 
Let $X_{0} \subset X$ be a Zariski open set such that $\pi \colon \bar X \to X$ is an isomorphism over $X_{0}$ 
and $\codim(X \setminus X_{0}) \,\geq\, 2$. 
Consider the singular Hermitian metric on $(\pi_{*} L)^{**}\bigl|_{X_{0}} \cong \pi_{*} L$ induced by $h$, 
which extends to a singular Hermitian metric $g$ on $(\pi_{*} L)^{**}$ with semi-positive curvature. 
The above argument shows that $g$ must be Hermitian flat. 
By the construction of $g$, 
the support of $\sqrt{-1}\,\Theta_{h}$ is contained in the exceptional locus of $\pi \colon \bar X \to X$. 
Hence, by the support theorem of currents, we see that $\sqrt{-1}\,\Theta_{h}$ can be written as $\sqrt{-1}\,\Theta_{h}=[E]$, 
where $[E]$ is the integration current of an effective $\pi$-exceptional divisor $E$. 
This would imply $c_{1}(L) = c_{1}(E)$, contradicting $\kappa(L) > 0$. 

The latter conclusion follows from \cite[Theorem 7.8]{Cam04}.
\end{proof}

\begin{theo}\label{thm-irr}
Let $X$ be a compact K\"ahler manifold with pseudo-effective tangent bundle.
Then, the following statements hold$:$ 
\begin{itemize}
\item[$\rm{(1)}$] The inequality $\hat{q}(X) \leq \dim X$ holds, 
where $\hat{q}(X)$ denotes the augmented irregularity defined by 
$$
\hat{q}(X):=
\sup \{ 
q(X')\,|\, X' \to X \text{ is a finite \'etale cover} 
\}. 
$$
In particular, there exists a finite \'etale cover $X' \to X$ such that $q(X')=\hat{q}(X)$.

\item[$\rm{(2)}$]  If $\hat{q}(X) = 0$, then $X$ is a projective and rationally connected manifold. 
\end{itemize}
\end{theo}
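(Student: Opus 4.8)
The plan is to prove Theorem \ref{thm-irr} in two parts, with the projectivity statement in (2) being the crux.

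\textbf{Part (1).} The plan is to bound the augmented irregularity by exploiting the pseudo-effectivity of $\Lambda^m T_X$ established via Lemma \ref{lemm-ext}. For a finite \'etale cover $X' \to X$, the tangent bundle $T_{X'}$ remains pseudo-effective (pseudo-effectivity is preserved under \'etale pullback). A nonzero holomorphic $1$-form on $X'$ gives a nonzero section of $\Omega_{X'} = T_{X'}^*$, and wedging such forms produces sections of $\Omega_{X'}^q$ where $q = q(X')$. If $q > \dim X$ this already forces $\Omega_{X'}^q = 0$, giving the bound trivially; the content is that a finite cover realizes the supremum. First I would show that the irregularities $q(X')$ are uniformly bounded by $\dim X$ and then invoke that the bound is achieved on some finite cover, so the supremum in the definition of $\hat{q}(X)$ is attained. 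The uniform bound should follow from the fact that the Albanese dimension is controlled by $\dim X$, together with the pseudo-effectivity constraint forcing the Albanese map to have the expected behavior (any abelian quotient of $\pi_1$ has rank at most $2\dim X$, but the holomorphic $1$-forms span at most $\dim X$ complex dimensions).

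\textbf{Part (2).} Assume $\hat{q}(X) = 0$. The strategy, following the outline in the introduction and the idea from \cite{Mat}, is to prove projectivity by contradiction and then deduce rational connectedness. Suppose $X$ is not projective. By the Kodaira criterion, a non-projective compact K\"ahler manifold carries a nonzero holomorphic $2$-form, or more precisely one detects non-algebraicity through the Hodge structure; the key consequence is that $\Omega_X$ admits a \emph{flat} holomorphic subbundle $\mathcal{F}$ arising from a piece of the variation of Hodge structure or from a nonzero global holomorphic form that is not cohomologous to an algebraic class. Let $\rho \colon \pi_1(X) \to \GL(r\mathord{:}\,\mathbb{C})$ be the monodromy representation associated with $\mathcal{F}$. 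Theorem \ref{thm-sp} shows $X$ is of special type, hence the image of $\rho$ is virtually abelian.

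The heart of the argument is to upgrade virtual abelianity of $\rho$ to \'etale triviality of $\mathcal{F}$. The plan is: after passing to a finite \'etale cover $X' \to X$ (which preserves all hypotheses, including $\hat q = 0$ by definition), the image of $\rho$ becomes abelian, so $\mathcal{F}$ decomposes according to the simultaneous eigenspaces of the commuting monodromy operators, reducing to the rank-one case. A flat line bundle on $X'$ corresponds to a character $\pi_1(X') \to \mathbb{C}^*$; since such a character factors through the abelianization and hence through $\mathrm{Alb}(X')$, and since $\hat{q}(X') = \hat{q}(X) = 0$ forces $q(X') = 0$ so that $\mathrm{Alb}(X')$ is trivial and $H^1(X',\mathbb{Z})$ is torsion, every such character has finite order. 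Thus after a further finite \'etale cover all the flat line subbundles become trivial, making $\mathcal{F}$ \'etale trivializable. But a trivial summand of $\Omega_{X'}$ produces a nonzero holomorphic $1$-form, whence $q(X') > 0$, contradicting $\hat{q}(X) = 0$. This contradiction shows $X$ is projective. Once $X$ is projective with pseudo-effective $T_X$ and $\hat{q}(X) = 0$, the structure theorem for smooth projective varieties from \cite{HIM22} applies: the Albanese map is trivial and the MRC fibration exhibits $X$ as rationally connected.

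The step I expect to be the main obstacle is making precise the passage from ``non-projective K\"ahler'' to ``existence of the flat subsheaf $\mathcal{F}$ with virtually abelian, hence (after covers) \emph{finite-order} monodromy''. The delicate point is ensuring the flat structure interacts correctly with the Hodge-theoretic origin of the non-algebraic $2$-form, so that \'etale triviality of $\mathcal{F}$ genuinely recovers a holomorphic $1$-form and contradicts $\hat q(X)=0$; one must verify that the monodromy characters are unitary (or at least of finite order under $q=0$) rather than merely lying in an abelian group, which is where the vanishing of the augmented irregularity is used in an essential way.
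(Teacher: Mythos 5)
Your outline tracks the paper's (both follow the sketch in the introduction), but in both halves the actual mathematical content is missing or wrong. In Part (1), neither of your suggested justifications for the uniform bound $q(X') \le \dim X$ works. Linearly independent holomorphic $1$-forms can have vanishing wedge product, so $q(X') > \dim X$ does not produce a nonzero section of $\Omega_{X'}^{q(X')} = 0$: a curve of genus $2$ has $q = 2 > 1 = \dim$, and its two $1$-forms wedge to zero. The same example refutes your parenthetical claim that an abelian quotient of $\pi_1$ has rank at most $2\dim X$ (here $H_1$ has rank $4$). The essential input, which you gesture at (``the pseudo-effectivity constraint forcing the Albanese map to have the expected behavior'') but never identify, is that the Albanese map of a compact K\"ahler manifold with pseudo-effective tangent bundle is \emph{surjective} (\cite[Theorem 3.12]{HIM22}); applying this to the cover $X'$, whose tangent bundle is again pseudo-effective, gives $q(X') = \dim \Alb(X') \le \dim X' = \dim X$, and a bounded supremum of integers is attained.

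In Part (2), the step you yourself flag as the main obstacle is a genuine gap, and it is not Hodge-theoretic in nature. A nonzero holomorphic $2$-form on a compact K\"ahler manifold does \emph{not} yield a flat subbundle of $\Omega_X$ through ``a piece of the variation of Hodge structure''; the flatness of $\mathcal{F}$ is where the pseudo-effectivity of $T_X$ does its real work. The paper's chain is: the $2$-form induces $\sigma \colon T_X \to \Omega_X$; set $\mathcal{F} := (\sigma(T_X))^{**}$; both $\mathcal{F}$ and $\mathcal{F}^{*}$ are pseudo-effective, being generically surjective images of $T_X$ (for $\mathcal{F}^*$, dualize the inclusion $\mathcal{F} \subset \Omega_X$), hence $c_1(\det \mathcal{F}) = 0$; then \cite[Lemma 2.9 (1)]{IMZ} and \cite[Lemma 1.20]{DPS94} show $\mathcal{F}$ is a subvector bundle of $\Omega_X$, and Wu's theorem \cite{Wu22} (a strongly pseudo-effective reflexive sheaf with vanishing first Chern class is numerically flat) finally gives flatness. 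Without this chain there is no flat bundle and no representation $\rho$ to which Theorem \ref{thm-sp} can be applied. A secondary gap: your reduction of the abelian-monodromy case to rank-one eigenspaces requires simultaneous diagonalizability, which fails for unipotent abelian images; it is also unnecessary, since once $\Image(\rho)$ is abelian the whole representation factors through $H_1(X,\mathbb{Z})$, which is finite because $q(X)=0$ and $X$ is K\"ahler, so $\Image(\rho)$ is finite and $\mathcal{F}$ is \'etale-trivializable outright --- this also disposes of your closing worry about unitarity. Your remaining steps (a trivialized $\mathcal{F}$ gives a $1$-form on a cover, contradicting $\hat{q}(X)=0$; projectivity plus the projective structure theorem of \cite{HIM22} gives rational connectedness) do agree with the paper.
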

\begin{proof}
(1) Let $\nu \colon X' \to X$ be a finite \'etale cover. 
Then $X'$ also has the pseudo-effective tangent bundle. 
Thus, by \cite[Theorem 3.12]{HIM22}, the Albanese map $X' \to \Alb(X')$ is surjective, 
showing 
\[
q(X') = \dim \Alb(X')  \leq \dim X' = \dim X,
\]
which proves the first statement.

\smallskip

(2) 
Once we know that $X$ is projective, by applying Theorem \ref{thm-main} (in the projective case), 
we can conclude that the base $Y$ of the MRC fibration of $X$ must be a single point by $\hat{q}(X)=0$, 
which shows that $X$ is rationally connected. 

Assume, for contradiction, that $X$ is not projective. 
It suffices to show that some finite \'etale cover of $X$ is projective. 
Since $X$ is non-projective, there exists a non-zero holomorphic $2$-form on $X$, 
inducing a non-zero morphism $\sigma \colon T_{X} \to \Omega_{X}$. 
Define the reflexive sheaf $\mathcal{F}$ by  the reflexive hull $\mathcal{F}:=(\sigma(T_{X}))^{**}$ of the image $\sigma(T_{X})$. 
Then, by construction, we have a generically surjective morphism
\[
\sigma \colon T_{X} \;\to\; \mathcal{F} \;\subset\; \Omega_{X}.
\]
Hence, the sheaf $\mathcal{F}$ is pseudo-effective. 
On the other hand, the dual sheaf $\mathcal{F}^{*}$ is also pseudo-effective 
since $\mathcal{F}^{*}$ is the quotient of a generically surjective morphism $T_{X} \to \mathcal{F}^{*}$. 
This implies that the line bundle 
$
\det \mathcal{F} := (\Lambda^{r} \mathcal{F})^{**}
$
has vanishing first Chern class, where $r$ is the rank of $\mathcal{F}$. 
In particular, the vector bundle 
\[
(\Lambda^{r}\Omega_{X} \otimes \det \mathcal{F}^{*})^{*} = \Lambda^{r} T_{X} \otimes \det \mathcal{F}
\]
is pseudo-effective. 
Thus, the induced sheaf morphism $\det \mathcal{F} \to \Lambda^{r}\Omega_{X}$ is injective as a bundle morphism 
by the proof of \cite[Lemma 2.9 (1)]{IMZ}. 
Note that the proof of \cite[Lemma 2.9 (1)]{IMZ} does not require the projectivity of $X$. 
Then, by \cite[Lemma 1.20]{DPS94}, the sheaf $\mathcal{F}$ is actually a subvector bundle of $\Omega_{X}$. 
Note that a pseudo-effective vector bundle in our paper is strongly pseudo-effective in the sense of  \cite[Definition 1]{Wu22}. 
Thus, by applying \cite[Main Theorem, Corollary]{Wu22} to $\mathcal{F}$, 
we conclude that $\mathcal{F}$ is a numerically flat locally free sheaf (cf.\,\cite[Theorem 2.11]{MWb}). 
In particular $\mathcal{F}$ is a flat vector bundle.

Consider the $\GL$-representation associated with $\mathcal{F}$: 
\[
\rho \colon \pi_{1}(X) \to \GL(r\mathord{:}\,\mathbb{C}).
\] 
The latter conclusion of Theorem \ref{thm-sp} shows that $\Image(\rho)$ is virtually abelian. 
Thus, after possibly replacing $X$ with a finite \'etale cover, 
we may assume $\Image(\rho)$ is abelian. 
The first homology group $H_{1}(X, \mathbb{Z})$ is the abelianization of $\pi_{1}(X)$. 
By the universal property of abelianization, the representation $\rho$ factors through 
\[
\pi_{1}(X) \;\to\; H_{1}(X, \mathbb{Z}) \;\to\; \GL(r\mathord{:}\,\mathbb{C}).
\]
On the other hand, since $q(X) = 0$ and $X$ is K\"ahler, 
we see that $H_{1}(X,\mathbb{Z})$ is a torsion group (in particular, it is a finite group). 
Hence, the image $\Image(\rho)$ is finite, 
which implies there exists a finite \'etale cover $\nu \colon X' \to X$ such that 
$\nu^{*}\mathcal{F} \subset \nu^{*}\Omega_{X} = \Omega_{X'}$ is a trivial vector bundle on $X'$. 
Then $X'$ admits a non-zero $1$-form, contradicting $\hat{q}(X)=q(X')=0$.
\end{proof}

We now complete the proof of Theorem \ref{thm-main}.

\begin{proof}[Proof of Theorem \ref{thm-main}]
First, we prove the statement for a compact K\"ahler manifold $X$ with a pseudo-effective tangent bundle 
by induction on $\dim X$. 
Suppose that the statement is known in dimensions less than $n := \dim X$.
It is sufficient to prove the conclusion 
after we replace $X$ with a finite \'etale cover 
by the argument in \cite{CH19} and \cite[Corollary 2.11]{Hor07} 
(see also the proof of \cite[Theorem 1.1]{Mat}). 

Consider the Albanese map $\phi \colon X \to Y := \Alb(X)$. 
By replacing $X$ with a finite \'etale cover, we may assume $\hat{q}(X)=q(X)=\dim Y$ by Theorem \ref{thm-irr} (1). 
Note that $\phi \colon X \to Y$ is a smooth fibration, $Y$ is a finite \'etale quotient of a torus, 
and, a very general fiber $F$ of $\phi$ also has the pseudo-effective tangent bundle (see \cite[Theorem 3.12]{HIM22}). 
By Theorem \ref{thm-irr} (2), it suffices to prove that $\hat{q}(F)=0$.

Replacing $X$ with the fiber product induced by a finite \'etale cover $T \to Y$, where $T$ is a torus, 
we may assume $Y$ is  a torus. 
By the induction hypothesis, we may assume that $F$ admits a fibration satisfying the properties in Theorem \ref{thm-main}, 
in particular, we see that $\pi_{1}(F)$ is virtually abelian. 
Then, we can apply \cite[Proposition 3.12, Remark 3.13]{DPS94} to obtain  
\[
\dim Y=\hat{q}(X) = \hat{q}(F) + \hat{q}(Y) = \hat{q}(F) + \dim Y,
\]
which shows $\hat{q}(F) = 0$.

\smallskip

Finally, we assume $T_X$ admits a positively curved singular Hermitian metric, 
and then will prove that $\phi \colon X \to Y$ is a locally constant fibration.  
By \cite[Theorem 3.12]{HIM22}, the standard exact sequence 
\[
0 \;\longrightarrow\; T_{X/Y} 
\;\longrightarrow\; T_X \;\longrightarrow\; \phi^* T_Y \;\longrightarrow\; 0 
\]
splits, and thus $\phi \colon X \to Y$ is locally trivial by Ehresmann's theorem (see also \cite[Lemma 3.19]{Hor07}). 
Moreover, since $\phi \colon X \to Y$ is an MRC fibration of $X$, 
it is actually a projective morphism by \cite[1.1 Theorem]{CH}. 
Therefore, by \cite[Lemma 3.1]{Mul}, we can conclude that $\phi \colon X \to Y$ is a locally constant fibration. 
Strictly speaking, \cite[Lemma 3.1]{Mul} assumes $Y$ is projective, 
but this assumption can be removed by checking the proof in \cite{Mul}. 
Indeed, the proof there is divided into three steps: 
the first two steps do not require $Y$ to be projective (only that $\phi$ itself is a projective morpshim), 
while the third step uses \cite[Theorem 1.6 and Proposition 1.7]{Mul} 
to infer numerical flatness from the existence of holomorphic connections. 
Although projectivity appears to be used, 
in practice \cite[Remark 3.7.(ii)]{Bis95} and Simpson's correspondence hold for compact K\"ahler manifolds, 
which ensures \cite[Theorem 1.6 and Proposition 1.7]{Mul} remain valid 
for compact K\"ahler manifolds as well.
\end{proof}


\end{document}